\documentclass[twoside,11pt]{article}
\usepackage{pgm}
\pdfoutput=1

\usepackage[utf8]{inputenc}
\inputencoding{utf8}

\usepackage{hyperref,color,soul,booktabs} %
\usepackage{mathtools} %
\usepackage[ruled,lined]{algorithm2e} %
\usepackage{stackengine} %
\usepackage{paralist}
\usepackage{tikz} %
\usetikzlibrary{arrows.meta,calc,decorations.pathmorphing}
\setulcolor{blue}
\newcommand{\BibTeX}{\textsc{B\kern-0.1emi\kern-0.017emb}\kern-0.15em\TeX}




\renewcommand{\eqref}[1]{(\ref{eq:#1})}
%
%
%



\DeclarePairedDelimiterX\set[1]\lbrace\rbrace{#1}

\newcommand{\myempty}{\emptyset} 

\newcommand{\R}{\mathbb{R}} 

\newcommand{\trans}{^T}


\newcommand{\pa}{\mathrm{pa}}

\newcommand{\htr}{\mathrm{htr}}
\newcommand{\PD}{\mathrm{PD}}
\newcommand{\bA}{\mathbf{A}}
\newcommand{\bb}{\mathbf{b}}
\newcommand{\bc}{\mathbf{c}}





\newcommand{\cM}{\ensuremath{\mathcal M}}

\newcommand{\cY}{\ensuremath{\mathcal Y}}







\SetArgSty{} 



%
%
%
%

%
\ShortHeadings{Graphical Representations for Algebraic Constraints}{Van Ommen and Drton}

\begin{document}

\title{Graphical Representations for Algebraic Constraints\\ of Linear Structural Equations Models}

\author{\Name{Thijs van Ommen} \Email{t.vanommen@uu.nl}\\
   \addr Information and Computing Sciences, Utrecht University, The Netherlands\and
   \Name{Mathias Drton} \Email{mathias.drton@tum.de}\\
   \addr Mathematics, Technical University of Munich, Germany}

\maketitle

\begin{abstract}%
  The observational characteristics of a linear structural equation model can be effectively described by polynomial constraints on the observed covariance matrix. However, these polynomials can be exponentially large, making them impractical for many purposes. In this paper, we present a graphical notation for many of these polynomial constraints. The expressive power of this notation is investigated both theoretically and empirically.  %
\end{abstract}
\begin{keywords}
structural equation model; mixed graph; multivariate Gaussian; algebraic constraint. %
\end{keywords}

\section{Introduction}

It is well known that for any directed acyclic graph, the set of observational distributions realizable by that graph can be described in terms of a set of conditional independence constraints that are imposed on the distributions. Our understanding of these constraints has enabled many sophisticated algorithms for learning the structure of Bayesian networks. %

When latent confounders are considered, as is important for purposes of causal discovery, the situation changes and additional constraints such as the Verma constraint \citep{Robins1986_VermaConstraint,VermaPearl1991_VermaConstraint} are needed in addition to conditional independences. Here we need to distinguish between the nonparametric (as well as discrete) case on the one hand and the linear case on the other. In the nonparametric case, the constraints involved are becoming better and better understood \citep{TianPearl2002_TestableImplications,ShpitserERR2014_NestedMarkovIntroduction}. In the linear case, specifically for linear structural equation models (LSEMs), these same constraints apply \citep{ShpitserEvansRichardson2018_LSEMNestedMarkov}, as well as others such as the vanishing tetrad constraint \citep{SpirtesGlymourScheines2000} and its higher-dimensional generalizations \citep{SullivantTalaskaDraisma2010}.

In principle, the constraints imposed by a graph in the linear case can be found using methods from computer algebra \citep{Drton2016ARXIV_LSEM_AlgebraicProblems}, yet these can be extremely slow in practice. Further, the constraints are output in the form of long polynomials, which do not immediately lend themselves to a better understanding of these graphical models. In this paper, we present a new, graphical description of such constraints, which is much more succinct and which we believe will help further the understanding of linear structural equation models. In particular, knowledge about constraints helps checking whether different graphs encode observationally indistinguishable hypotheses because the graphs are equivalent in terms of defining the same statistical model. Moreover, constraints may be deployed for goodness-of-fit tests (e.g., \citet{LeungDrton2018}), %
and they can also lead to new criteria to decide parameter identifiability \citep{WeihsRDKKMNRobevaDrton2018_DetGeneralizationsOfIVs}.

This paper is structured as follows. Section~\ref{sec:prelim} discusses preliminaries about linear structural equation models. In Section~\ref{sec:main}, we state our contributions: the definition of graphical constraints, an algorithm for deriving a list of constraints for a given model, and a number of ancillary results. Empirical results are given in Section~\ref{sec:empirical}, and Section~\ref{sec:conclusion} concludes.

\section{Preliminaries}\label{sec:prelim}

In a linear structural equation model (LSEM), the value of each variable $X_v$ is governed by an equation of the form (see e.g.~\citet{FoygelDraismaDrton2012_htc} for a more extensive exposition)
\begin{equation*}
  X_v = \lambda_{0,v} + \sum_w \lambda_{w,v} X_w + \epsilon_v.
\end{equation*}
The structure of an LSEM can be represented by a \emph{mixed graph} $G = (V, D, B)$, where the nodes $V$ index the random variables $X_v$ of the LSEM, each directed edge $v \to w \in D$ denotes that $X_v$ appears on the right-hand side of $X_w$'s structural equation, and $v \leftrightarrow w \in B$ denotes that the noise terms $\epsilon_v$ and $\epsilon_w$ may have nonzero correlation. Note that LSEMs may contain directed cycles, though self-loops ($v \to v$) are not allowed. Also allowed are \emph{bows}: the co-occurrence of a directed and a bidirected edge between the same pair of nodes.

Ignoring the (to us) irrelevant additive terms $\lambda_{0,v}$, the parameters of an LSEM are the matrices $\Lambda$ and $\Omega$. $\Lambda$ must have $\Lambda_{v,w} = 0$ wherever $v \to w \notin D$, and must be such that $I - \Lambda$ is invertible. This last condition is automatically satisfied if $G$ contains no directed cycles. We write $\R^D_{\text{reg}}$ for the space of such $\Lambda$'s. $\Omega$ must have $\Omega_{v,w} = 0$ except on the diagonal and when $v \leftrightarrow w \in B$, and further it must be positive definite. We write $\PD(B)$ for the space of such $\Omega$'s, and write $\PD_n$ for the space of $n \times n$ positive definite matrices.

After observing samples from an LSEM, we can estimate the covariance matrix $\Sigma$ on the observed variables $(X_v)_{v \in V}$. Two central problems ask what we can do when we know $\Sigma$: the \emph{(parameter) identification} problem is to reconstruct the parameters $\Lambda$ and $\Omega$ from $\Sigma$ when the graph is known, and the \emph{structure learning} problem is to find an appropriate graph when only $\Sigma$ is known.

For structure learning, it is important to know for each graph $G$ what set of $\Sigma$'s could be observed for that graph and any choice of parameters. This set, the \emph{model} of $G$, is denoted $\cM(G)$. In general, $\cM(G)$ is a semialgebraic set (we refer to \citet{CoxLittleOShea2015} for further background on algebraic geometry), described by polynomial equalities and inequalities. We will only study the equalities ($f(\Sigma) = 0$) in this paper; as argued by \citet{VanOmmenMooij2017_AlgebraicEquivalence}, the inequalities are of secondary importance for structure learning. The set of polynomials that are zero everywhere on $\cM(G)$ form an \emph{ideal} $I_G$. This ideal can be described by a finite number of \emph{generators} (our constraints play this role).
The \emph{algebraic model} $\overline{\cM(G)}$ is defined as $V(I_G)$, where $V(I)$ denotes the set of points on which all polynomials in ideal $I$ evaluate to zero. In other words, $\overline{\cM(G)} \supseteq \cM(G)$ ignores any inequality constraints imposed by $\cM(G)$.

Each of the constraints describing an algebraic model is an \emph{homogeneous} polynomial: each term has the same total degree. In fact, these polynomials obey a stronger property we call \emph{$V$-homogeneity}: for each model variable $v \in V$, the number of occurrences of $v$ in the indices of the $\sigma$'s (counting $\sigma_{vv}$ twice) is the same for all terms in the polynomial.  Rescaling an LSEM variable by a constant does not affect whether $f(\Sigma) = 0$ iff $f$ is $V$-homogeneous.

\subsection{Rational and Polynomial Constraints}\label{sec:uai_thm}

In order to derive polynomial constraints, we will need expressions for the parameters as a function of $\Sigma$. This is the problem of parameter identification for LSEMs, and a powerful method for this problem is the half-trek criterion (HTC: \citealp{FoygelDraismaDrton2012_htc}). A \emph{half-trek} is a directed path, except that the first edge on the path may instead be bidirected. The set of nodes reachable from $v$ by a half-trek is denoted $\htr(v)$. The HTC-identifiability theorem states that if for each $v \in V$, a set $Y_v$ exists that satisfies certain conditions, then rational functions exists that identify the parameters. We write $\cY$ for the sequence $(Y_v)_v$ of HTC-identifying sets, and $\Lambda_\cY(\Sigma)$ for the function mapping $\Sigma$ to $\Lambda$.

We build on Theorem~1 of \cite{VanOmmenMooij2017_AlgebraicEquivalence}, which shows that for an HTC-identifiable graph $G$ with HTC-identifying sets $\cY = (Y_v)_v$:
\begin{compactenum}
\item For generic $\Sigma \in \cM(G)$, $\Lambda_\cY(\Sigma)$ is defined, is in $\R^D_{\text{reg}}$, and satisfies the \emph{rational constraints}
  \begin{equation}\label{eq:rational}
    [(I - \Lambda_\cY(\Sigma))\trans \Sigma (I - \Lambda_\cY(\Sigma))]_{v,w} = 0%
  \end{equation}
  for all $v \neq w$ with $v \leftrightarrow w \notin B$, $v \notin Y_w$, and $w \notin Y_v$;
\item All $\Sigma \in \cM(G)$ satisfy the \emph{polynomial constraints} obtained by multiplying out the denominators from \eqref{rational};
\item Among $\Sigma$ that satisfy the rational constraints \eqref{rational} and have $\Lambda_\cY(\Sigma) \in \R^D_{\text{reg}}$, generically $\Sigma \in \cM(G)$.
\end{compactenum}
Statements 1 and 2 (showing that $\Sigma \in \cM(G)$ implies satisfaction of constraints) are strong: statement~2, about the polynomial constraints, is not even limited to generic $\Sigma$. Statement~3 in the other direction is weaker. In particular, it does not imply anything for $\Sigma$ where the rational constraints are undefined, or for which $\Lambda_\cY(\Sigma) \notin \R^D_{\text{reg}}$. We will revisit this issue in Section~\ref{sec:ideals}.

\section{Main Results}\label{sec:main}

In this section, we present the definition of graphical constraints, followed by an algorithm that finds such constraints for any HTC-identifiable mixed graph. Section~\ref{sec:ideals} discusses the question of how well a list of graphical constraints describes an algebraic model, and Section~\ref{sec:transformation} introduces transformation operations that may be performed on graphical constraints.

\subsection{Graphical Constraints}

A \emph{graphical constraint} is an undirected bipartite graph (usually a tree), in which each node is labelled with a subset of the model variables $V$.

Each graphical constraint represents a polynomial. This polynomial is the symbolic determinant of the matrix $M$ defined as follows. Let $A$, $B$ be the two parts of the bipartite graphical constraint. The matrix $M$ has a row for each pair $(a,v)$ with $a \in A$ and $v$ in $a$'s label, and a column for each pair $(b,w)$ with $b \in B$ and $w$ in $b$'s label. We use these pairs to index the rows and columns of $M$. We require that $M$ is square. Then $M$ is given by
\begin{equation}\label{eq:matrix}
  M_{(a,v),(b,w)} = \begin{cases}
    \sigma_{v,w} & \text{if $a$ and $b$ are adjacent;}\\
    0 & \text{otherwise.}
  \end{cases}
\end{equation}
We say an observational covariance matrix $\Sigma$ \emph{satisfies} a graphical constraint if the determinant of $M$ evaluates to 0 at $\Sigma$.

A different ordering of the rows and columns of $M$ may flip the sign of the determinant. Because a graphical constraint does not fix a particular ordering, it only represents a polynomial up to sign. This does not pose a problem, since it has no effect on whether or not for given $\Sigma$, the polynomial equals 0.

\begin{figure}[t]
  \centering
  \stackunder{\begin{tikzpicture}[scale=.8]
    \node (v0) at (0,1) {ac};
    \node (v3) at (1,0) {bc};
    \draw (v0) to (v3);
  \end{tikzpicture}}{(a)}
  \qquad
  \stackunder{\raisebox{.5cm}{$\begin{bmatrix}
    \sigma_{ab} & \sigma_{ac}\\
    \sigma_{cb} & \sigma_{cc}
  \end{bmatrix}$}
  }{(b)}
  \qquad
  \stackunder{\begin{tikzpicture}
    \node [circle,fill=black,inner sep=1pt] (a) at (0.0,1.0) [label=180:$\mathstrut a$] {};
    \node [circle,fill=black,inner sep=1pt] (b) at (1.0,1.0) [label=0:$\mathstrut b$] {};
    \node [circle,fill=black,inner sep=1pt] (c) at (1.0,0.0) [label=0:$\mathstrut c$] {};
    \node [circle,fill=black,inner sep=1pt] (d) at (0.0,0.0) [label=180:$\mathstrut d$] {};
    \draw [blue,arrows=
      {_-Stealth[sep,length=1ex]}]
      (a) -- (b);
    \draw [blue,arrows=
      {_-Stealth[sep,length=1ex]}]
      (d) -- (a);
    \draw [blue,arrows=
      {_-Stealth[sep,length=1ex]}]
      (a) to[bend left=15] (c);
    \draw [blue,arrows=
      {_-Stealth[sep,length=1ex]}]
      (c) to[bend left=15] (a);
    \draw [red,dashed,arrows=
      {Stealth[sep,length=1ex]-Stealth[sep,length=1ex]}]
      (a) to[bend left] (b);
  \end{tikzpicture}}{(c)}
  \qquad
  \stackunder{\begin{tikzpicture}[scale=.8]
    \node (v0) at (0,1) {d};
    \node (v1) at (2,1) {ab};
    \node (v2) at (1,0) {ac};
    \node (v3) at (3,0) {d};
    \draw (v0) to (v2);
    \draw (v1) to (v2);
    \draw (v1) to (v3);
  \end{tikzpicture}}{(d)}
  \qquad
  \stackunder{\raisebox{.8cm}{$\begin{bmatrix}
    \sigma_{da} & \sigma_{dc} & 0\\
    \sigma_{aa} & \sigma_{ac} & \sigma_{ad}\\
    \sigma_{ba} & \sigma_{bc} & \sigma_{bd}
  \end{bmatrix}$}
  }{(e)}
  \caption{(a)~a graphical constraint (we draw these in a zig-zag way to make the bipartite parts more easily recognizable); (b)~the matrix represented by this graphical constraint; (c)~an LSEM (with a bow and a directed cycle) imposing a single constraint, namely the graphical constraint displayed in~(d), with (e)~the matrix it represents.}\label{fig:first_examples}
\end{figure}
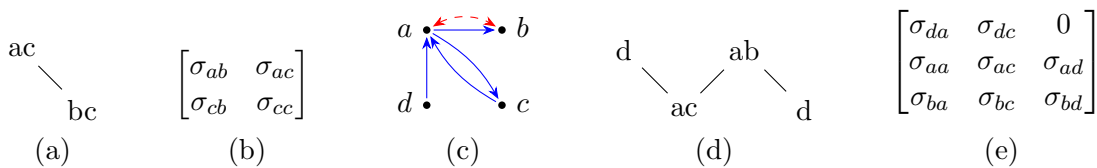
Two examples of graphical constraints are displayed in Figure~\ref{fig:first_examples}~(a) and~(d). The determinant of~(b) is $\sigma_{ab} \sigma_{cc} - \sigma_{ac} \sigma_{bc}$, which is zero iff the partial correlation of $a$ and $b$ given $c$ is zero; this is the Gaussian analogue of conditional independence.

We say a graphical constraint is in normal form if no two nodes have the same set of neighbours. A pair of nodes that have the same set of neighbours can be merged into a single node whose label is the union of the original nodes' labels, without changing the matrix. Restricting graphical constraints to normal form ensures that two different graphical constraints do not describe the same matrix. For example, when the entire tree consists of a central node with some leaf nodes around it, the matrix \eqref{matrix} will have no zero entries, and many similar trees represent the same matrix.

The polynomial represented by a graphical constraint may have exponentially many terms. An immediate advantage of the graphical representation is that evaluating a polynomial for given $\Sigma$ can be made computationally much more efficient by using algorithms for computing determinants (of sparse matrices), compared to evaluating the exponentially many terms. This is useful for testing a constraint on observational data. %

\subsection{Construction Algorithm}\label{sec:htc_construction}

Algorithm~\ref{alg:htc_construction} can be used to find a graphical representation for each of the constraints in \eqref{rational}. The relation between rational/polynomial constraints and the graphical constraints is made precise below in Theorem~\ref{thm:htc_construction}.
\begin{algorithm}[t]
  \SetKwFunction{main}{main}\SetKwFunction{expand}{expand}
  \KwIn{An HTC-identifiable graph $G$, a family $\cY = (Y_v)_v$ of HTC-identifying sets, and a pair $\set{v,w}$ as in \eqref{rational}}
  \KwOut{A graphical constraint}
  \BlankLine
  \nl Start with a graph consisting of two `seed' nodes $t_1$ and $t_2$ joined by an edge, one with label $\set{v}$ and one with label $\set{w}$\;
  \nl \expand{$t_1$}\;
  \nl \expand{$t_2$}\;
  \nl Wherever two or more leaf nodes are attached to the same node, merge them into a single leaf node labelled with the union of those leaf nodes' labels.
  \BlankLine
  \SetKwProg{myproc}{Subroutine}{:}{end}
  \myproc{\expand{$t$}}{
    \nl Let $\set{v}$ be the label of $t$\;
    \nl Replace the label of $t$ by $\set{v} \cup \pa(v)$\;
    \nl For each $y \in Y_v$, create a new node $t_y$ attached to $t$ with label $\set{y}$\;
    \nl For each $y \in Y_v$ with $y \in \htr(v)$, \expand{$t_y$}\;
    }
  \caption{HTC-based graphical constraint construction.}\label{alg:htc_construction}
\end{algorithm}

Line~4 of the algorithm brings the constraint into normal form. It suffices to check leaf nodes because the graph is a tree, so the only pairs of nodes that could have the same set of neighbours are leaf nodes.

Figure~\ref{fig:htc_construction} displays an example where Algorithm~\ref{alg:htc_construction} is applied to the graph~(a). This graph is bow-free and acyclic, in which case it is possible to take $Y_v = \pa(v)$ as the HTC-identifying sets $\cY$. With this input, the algorithm produces the graphical constraint in~(b). This constraint represents the matrix~(c), whose determinant is a polynomial of degree~3 with 4~terms. The LSEM in Figure~\ref{fig:first_examples}~(c) is not bow-free or acyclic; with $Y_a=\set{c,d}$, $Y_b=Y_c=\set{d}$ and $Y_d = \myempty$, the algorithm outputs the graphical constraint in Figure~\ref{fig:first_examples}~(d).

\begin{figure}[t]
  \centering
  \stackunder{\begin{tikzpicture}
    \node [circle,fill=black,inner sep=1pt] (a) at (0.866025403784,1.0) [label=90.0:$\mathstrut a$] {};
    \node [circle,fill=black,inner sep=1pt] (b) at (0.866025403784,0.0) [label=270.0:$\mathstrut b$] {};
    \node [circle,fill=black,inner sep=1pt] (c) at (0.0,0.5) [label=180.0:$\mathstrut c$] {};
    \node [circle,fill=black,inner sep=1pt] (d) at (1.73205080757,0.5) [label=0.0:$\mathstrut d$] {};
    \draw [blue,arrows=
      {_-Stealth[sep,length=1ex]}]
      (a) -- (b);
    \draw [red,dashed,arrows=
      {Stealth[sep,length=1ex]-Stealth[sep,length=1ex]}]
      (a) -- (c);
    \draw [red,dashed,arrows=
      {Stealth[sep,length=1ex]-Stealth[sep,length=1ex]}]
      (a) -- (d);
    \draw [red,dashed,arrows=
      {Stealth[sep,length=1ex]-Stealth[sep,length=1ex]}]
      (b) -- (c);
    \draw [blue,arrows=
      {_-Stealth[sep,length=1ex]}]
      (b) -- (d);
  \end{tikzpicture}}{(a)}
  \qquad
  \stackunder{\begin{tikzpicture}[scale=.8]
    \node (v0) at (0,1) {\textcircled{c}};
    \node (v3) at (1,0) {b\textcircled{d}};
    \node (v1) at (2,1) {ab};
    \node (v4) at (3,0) {a};
    \draw (v0) to (v3);
    \draw (v1) to (v3);
    \draw (v1) to (v4);
  \end{tikzpicture}}{(b)}
  \qquad
  \stackunder{\raisebox{.8cm}{$\begin{bmatrix}
    \sigma_{cb} & \sigma_{cd} & 0\\
    \sigma_{ab} & \sigma_{ad} & \sigma_{aa}\\
    \sigma_{bb} & \sigma_{bd} & \sigma_{ba}
  \end{bmatrix}$}
  }{(c)}
  \caption{(a)~an LSEM; %
    (b)~the graphical constraint output by Algorithm~\ref{alg:htc_construction} (with the original seed node labels marked by circles); (c)~the matrix represented by this graphical constraint.}\label{fig:htc_construction}
\end{figure}
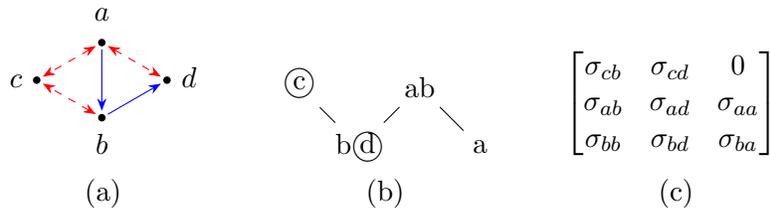

\begin{theorem}\label{thm:htc_construction}
  The polynomial represented by the graphical constraint from Algorithm~\ref{alg:htc_construction} is either equal to or a multiple of the numerator of the rational constraint \eqref{rational}.
\end{theorem}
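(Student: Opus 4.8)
The plan is to prove the slightly more precise statement that $\det M=\pm\,\pi^{(v)\trans}\Sigma\,\pi^{(w)}$, where $\pi^{(v)}$ and $\pi^{(w)}$ are the ``denominator-cleared'' forms of the columns of $I-\Lambda_\cY(\Sigma)$ indexed by $v$ and $w$; since $\pi^{(v)\trans}\Sigma\,\pi^{(w)}$ is the numerator of \eqref{rational} up to possible cancellation of common factors, this yields the theorem, the ``or a multiple of'' accounting for that cancellation. First I would recall the explicit recursive form of $\Lambda_\cY$ behind Theorem~1 of \cite{VanOmmenMooij2017_AlgebraicEquivalence}. Writing $\xi_u := e_u-\sum_{a\in\pa(u)}\lambda_{a,u}e_a$ for the $u$-th column of $I-\Lambda_\cY(\Sigma)$, the vector $\lambda_{\pa(u),u}$ is obtained by Cramer's rule from the $|\pa(u)|\times|\pa(u)|$ linear system whose equation indexed by $y\in Y_u$ is $\Sigma_{y,\pa(u)}\,\lambda_{\pa(u),u}=\sigma_{y,u}$ when $y\notin\htr(u)$, and $\xi_y\trans\Sigma\,\xi_u=0$ (which is linear in $\lambda_{\pa(u),u}$, the already-computed $\xi_y$ supplying the coefficients; the HTC hypotheses guarantee $\Omega_{y,u}=0$ here) when $y\in\htr(u)$. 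Rescaling the $\htr$-rows to clear denominators turns the coefficient matrix into a polynomial matrix $\widetilde C_u$; set $\pi^{(u)}:=\det(\widetilde C_u)\,\xi_u$, which is then a polynomial vector with $\pi^{(u)}_u=\det\widetilde C_u$. The key observation is that the tree built by Algorithm~\ref{alg:htc_construction} is literally the recursion tree of this procedure: the two seed nodes are the columns $v,w$, an \texttt{expand} at a node with seed $u$ is the step solving for $\lambda_{\pa(u),u}$, its merged-leaf child collects the equations with $y\in Y_u\setminus\htr(u)$, each recursively expanded child corresponds to a $y\in Y_u\cap\htr(u)$, and the seed edge corresponds to the constraint $\xi_v\trans\Sigma\,\xi_w$.

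With this in hand I would expand $\det M$ combinatorially. As the graphical constraint produced by Algorithm~\ref{alg:htc_construction} is a tree, every nonzero term in the Leibniz expansion of $\det M$ is a perfect matching of the bipartite graph on pairs (tree-node, label-element) where $(a,x)$ and $(b,y)$ are adjacent iff the tree-nodes $a,b$ are, each matched pair contributing the factor $\sigma_{x,y}$. Rooting the tree at the edge $t_1t_2$ and using that the subtree below any expanded node has exactly one more label-element on the root's side than on the other side (an easy induction over the tree), one sees that in any such matching each expanded node sends exactly one of its label-elements ``up'' to its parent while each merged leaf sends all of its elements up. So a matching is equivalent data to: for each expanded node with seed $u$, a choice of ``up'' element of $\{u\}\cup\pa(u)$ and a bijection matching the remaining $|\pa(u)|$ elements to the merged leaf's label and to the ``up'' elements of the recursive children. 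This is exactly the data indexing the Leibniz expansion of $\det\widetilde C_u$ after each recursive row $\pi^{(t_y)\trans}\Sigma_{\cdot,\pa(u)}$ is itself expanded. Matching the covariance factors (using $\Sigma=\Sigma\trans$) and inducting on the tree, one obtains that for any expanded node $t$ with seed $u$ and any $x_0\in\{u\}\cup\pa(u)$, the square submatrix of $M$ obtained by deleting the row or column $(t,x_0)$ and keeping only the rows and columns indexed by tree-nodes in the subtree at $t$ has determinant $\pm\,\pi^{(u)}_{x_0}$; the base case, in which that subtree is $t$ together with its merged leaf, is a direct Cramer-rule-plus-transpose computation. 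Splitting $M$ along the seed edge and summing over the one matched pair that uses it then gives $\det M=\sum_{a,b}\pm\,\sigma_{a,b}\,\pi^{(v)}_a\,\pi^{(w)}_b=\pm\,\pi^{(v)\trans}\Sigma\,\pi^{(w)}$.

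I expect the main obstacle to be the sign bookkeeping: reconciling the Leibniz signs of $\det M$ with those of the nested determinants $\det\widetilde C_u$ into a single global sign. For a tree this should work because the sign of a matching factors into a product of ``local'' signs, one per node, with no cycles to couple them; turning this into a clean induction, and dealing with the degenerate labels produced by the leaf-merging step (e.g.\ expanded nodes with no parents, whose $\pi^{(u)}$ is just $e_u$, or a leaf seed being absorbed into a sibling leaf), is where the bulk of the work lies. One also has to use, at the right places, the hypotheses under which Algorithm~\ref{alg:htc_construction} is run — that $\cY$ is a family of HTC-identifying sets — since these are exactly what makes $\Lambda_\cY(\Sigma)$ well-defined and the equations $\xi_y\trans\Sigma\,\xi_u=0$ legitimate.
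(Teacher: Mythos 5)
Your proposal is correct in substance and establishes the same identity as the paper, but it organizes the induction in the opposite direction. The paper inducts over the algorithm's expansion steps, maintaining intermediate matrices whose to-be-expanded entries are the rational quantities $[(I-\Lambda_\cY(\Sigma))\trans\Sigma]_{y,v}$, and drives each step with two determinant identities: the Cramer's-rule lemma \eqref{lemma} and a generalized Laplace expansion across the cut at the newly expanded node (its observation that ``each term contains exactly one entry from $(\set{v}\cup\pa(v))\times L$'' is precisely your ``exactly one label element matched up,'' justified by the same $\lvert\set{v}\cup\pa(v)\rvert=\lvert Y_v\rvert+1$ counting). You instead work bottom-up on the final, purely polynomial matrix, expanding $\det M$ over perfect matchings of the tree; your key subtree claim --- that the minor of $M$ on the subtree at $t$ with $(t,x_0)$ deleted equals $\pm\pi^{(u)}_{x_0}$ --- is the fully unrolled form of \eqref{lemma} specialized to standard basis vectors $\bc$. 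Both routes yield $\det M=\pm\bigl(\prod_u\lvert\bA^{(u)}\rvert\bigr)\cdot[(I-\Lambda_\cY(\Sigma))\trans\Sigma(I-\Lambda_\cY(\Sigma))]_{v,w}$, which gives the theorem. What your version buys is an explicit combinatorial description of the terms of $\det M$ and of the spurious factor $\det\widetilde C_v\det\widetilde C_w=\prod_u\lvert\bA^{(u)}\rvert$, which the paper only extracts in the remarks after the proof in Section~\ref{sec:htc_construction}.

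The one point where your route carries real unfinished work is the sign bookkeeping you flag yourself. For $\det M=\pm\,\pi^{(v)\trans}\Sigma\,\pi^{(w)}$ to hold as a polynomial identity, the Leibniz signs must be consistent \emph{globally} across all matchings; if they varied term by term, $\det M$ would be a genuinely different polynomial and the conclusion would fail, so this is not a cosmetic issue. Your heuristic (the sign of a matching factors into local per-node contributions, with no cycles to couple them) is the correct reason this works on a tree, but it still has to be carried out, together with the degenerate cases from the leaf-merging step. The paper's formulation via Cramer's rule and block Laplace expansion obtains these signs automatically, which is the main advantage of its organization.
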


\begin{proof}
  At each point in the algorithm \emph{after} an expansion step, we have a tree graph in which some nodes might still have to be expanded. Take the seed node that is expanded first to be the root of each of these trees. To each of these intermediate trees, we associate a matrix like \eqref{matrix}, except that at non-zero positions $y,v$ or $v,y$ where $y$ is to-be-expanded, the matrix entry is not $\Sigma_{y,v}$ but $[(I-\Lambda_\cY(\Sigma))\trans \Sigma]_{y,v}$. Here and in the following, we make slight abuse of notation and take $v$, $\pa(v)$, $Y_v$, etc.~to refer both to labels (i.e.~elements of $V$) and to the specific row or column indices corresponding to the (node, label element)-pair under consideration. %

  The proof proceeds by induction. Our induction hypothesis is that after each expansion step in the algorithm, the matrix at that point has a determinant that is a multiple of \eqref{rational}.

  First we prove that for any vector $\bc = [c_v, c_{p_1}, \ldots, c_{p_n}]\trans$ where $\set{p_1, \ldots, p_n} = \pa(v)$,
  \begin{equation}\label{eq:lemma}
    \lvert \bA^{(v)} \rvert \cdot [(I-\Lambda_\cY(\Sigma))_{\set{v} \cup \pa(v); v}]\trans \bc
    = \begin{vmatrix}
      c_v & c_{p_1} \cdots c_{p_n}\\
      \bb^{(v)} & \bA^{(v)}
      \end{vmatrix}.
  \end{equation}
  Here $\bA^{(v)}$ and $\bb^{(v)}$ are the matrix and vector from \cite[proof of Theorem~1]{FoygelDraismaDrton2012_htc}, where they are used to identify $\Lambda_\cY(\Sigma)_{\pa(v),v}$ as the solution of $\bA^{(v)} \cdot \Lambda_\cY(\Sigma)_{\pa(v),v} = \bb^{(v)}$. They are defined by $\bA^{(v)} = \Sigma_{Y_v, \pa(v)}$ and $\bb^{(v)} = \Sigma_{Y_v,v}$, except in rows where $y \in \htr(v)$: there $\bA^{(v)}_{y, \pa(v)} = [(I-\Lambda_\cY(\Sigma))\trans \Sigma]_{y,\pa(v)}$ and $\bb^{(v)}_{y} = [(I-\Lambda_\cY(\Sigma))\trans \Sigma]_{y,v}$.
  For each $p \in \pa(v)$, by Cramer's rule, $\Lambda_\cY(\Sigma)_{p,v} = \lvert \bA^{(v)}_p \rvert / \lvert \bA^{(v)} \rvert$, where $\bA^{(v)}_p$ is the matrix obtained from $\bA^{(v)}$ by replacing column $p$ by $\bb^{(v)}$. So
  \begin{align*}
    \lvert \bA^{(v)} \rvert \cdot [ I - \Lambda_\cY(\Sigma) ]_{\set{v} \cup \pa(v); v}
    &= \left[ \lvert\bA^{(v)}\rvert, -\lvert\bA^{(v)}_{p_1}\rvert, \ldots, -\lvert\bA^{(v)}_{p_n}\rvert \right]\trans.
  \end{align*}
  For $\bc$ a standard basis vector, \eqref{lemma} can be verified by noting that for $\bc$ with $c_p=1$ ($p \in \pa(v)$),
  \begin{equation*}
    -\lvert \bA^{(v)}_p \rvert
    =
    -\begin{vmatrix}
      1 & \mathbf{0}\\
      \mathbf{0} & \bA^{(v)}_p
    \end{vmatrix}
    =
    \begin{vmatrix}
      0 & c_{p_1} \cdots c_{p_n}\\
      \bb^{(v)} & \bA^{(v)}
    \end{vmatrix},
  \end{equation*}
  where the sign flips due to the transposition of two columns. Now the case of general $\bc$ follows by multilinearity of the determinant.

  \emph{(base case)} The first expansion performed by the algorithm turns a tree with only the two seed nodes $v$ and $w$ into a tree where only root node $v$ has been expanded, so all other nodes in the tree are children of $v$. The corresponding matrix has columns $\set{v} \cup \pa(v)$ and rows $\set{w} \cup Y_v$ (note that the roles of rows and columns can be interchanged). Because $w$ is to-be-expanded, entry $w,x$ in $w$'s row equals $[\Sigma (I-\Lambda_\cY(\Sigma))]_{x,w}$. %
  For each $y \in Y_v$, that $y$ is to-be-expanded iff $y \in \htr(v)$, so the other rows of the matrix equal either entries of $\Sigma$ or of $(I-\Lambda_\cY(\Sigma))\trans \Sigma$, the choice agreeing with the choice in the definitions of $\bA^{(v)}$ and $\bb^{(v)}$, establishing that this matrix equals the one in the right-hand side of \eqref{lemma} for $c_x = [\Sigma (I-\Lambda_\cY(\Sigma))]_{x,w}$.
  By \eqref{lemma}, the determinant equals a multiple of
  \begin{equation*}
    [(I - \Lambda_\cY(\Sigma))\trans \Sigma (I - \Lambda_\cY(\Sigma))]_{v,w},
  \end{equation*}
  proving that the induction hypothesis holds after the first expansion.

  \emph{(induction step)} For each subsequent expansion, let $v$ be the node that is expanded in this step, so that its label after expansion is $\set{v} \cup \pa(v)$ and the union of its children's labels is $Y_v$. Write $L$ for the label of $v$'s parent in the tree (it is the same before and after the expansion). The matrix $M'$ before expansion differs from the matrix $M$ after expansion in that the rows and columns corresponding to $\pa(v)$ and $Y_v$ are absent in $M'$, and the entries $v,\ell$ (or $\ell, v$) for $\ell \in L$ are equal to $[(I-\Lambda_\cY(\Sigma))\trans \Sigma]_{v, \ell}$. We need to show that the determinant of the matrix $M$ is a multiple of the determinant of the matrix $M'$.

  Because $\lvert \set{v} \cup \pa(v) \rvert = \lvert Y_v \rvert + 1$, each term in the expansion of the determinant of $M$ contains exactly one entry from $(\set{v} \cup \pa(v)) \times L$. So
  \begin{align*}
    \lvert M \rvert &= \sum_{\ell \in L} \lvert M_{\text{with $(\set{v} \cup \pa(v)) \times (L \setminus \set{\ell})$ replaced by zeros}} \rvert\\
    &= \sum_{\ell \in L} \lvert M_{(\set{v} \cup \pa(v)); (\set{\ell} \cup Y_v)} \rvert \cdot \lvert M_{\text{other rows and columns}} \rvert\\
    &= \lvert \bA^{(v)} \rvert \sum_{\ell \in L} [(I-\Lambda_\cY(\Sigma))_{\set{v} \cup \pa(v); v}]\trans \Sigma_{\set{v} \cup \pa(v); \ell} \cdot \lvert M_{\text{other rows and columns}} \rvert\\
    &= \lvert \bA^{(v)} \rvert \cdot \lvert M' \rvert.
  \end{align*}
  This completes the proof.
\end{proof}

This theorem shows that whenever the polynomial constraint from Section~\ref{sec:uai_thm}, point~2 is satisfied, then so is the corresponding graphical constraint.

The proof also makes explicit by what factors the rational constraint is multiplied: for each expansion of a node with label $\set{v}$, there is a multiplication by $\lvert \bA^{(v)} \rvert$. As the final product is a polynomial, clearly the product of these factors is a multiple of the rational constraint's denominator. So if the rational constraints are undefined for some $\Sigma$, this must mean that a relevant $\lvert \bA^{(v)} \rvert = 0$.
By inspecting the construction, we can also isolate from the graphical constraint the part that represents the polynomial $\lvert \bA^{(v)} \rvert$: it is the graphical constraint obtained by starting with a single node with label $\set{v}$ and expanding it, then finally removing the label $v$ from the starting node. %

\subsection{Describing Algebraic Models Using Graphical Constraints}\label{sec:ideals}

In terms of algebraic geometry (for which we refer to \citet{CoxLittleOShea2015}), the algebraic model $\overline{\cM(G)}$ is an irreducible variety, which is described by a prime ideal. The constraints found by Algorithm~\ref{alg:htc_construction} generate an ideal that is not necessarily prime. There are two ways in which an ideal can fail to be prime, using that an ideal is prime iff it is \emph{radical} and \emph{primary}:
\begin{compactitem}
\item An ideal $I$ is nonradical if $f^k \in I$ for some polynomial $f$ and some $k \geq 2$, but $f \notin I$. The radical $\sqrt{I}$ of the ideal $I$ also includes all such polynomials $f$. We are mostly concerned with the ideal's ability to describe a model, and $V(I) = V(\sqrt{I})$, so failure to be radical does not impact our goal.
\item If an ideal $I$ is nonprimary, then $V(I)$ is the union of multiple irreducible varieties. For instance, the ideal generated by $xy$ describes a variety that is the union of the two hyperplanes $x=0$ and $y=0$. Primary ideals describing these irreducible varieties can be found computationally using \emph{primary decomposition} algorithms, but these algorithms can be extremely computationally expensive. It follows from Theorem~1 of \cite{VanOmmenMooij2017_AlgebraicEquivalence} that one of the component varieties is the algebraic model we want to describe, meaning that the remaining components are spurious.
\end{compactitem}
The spurious components of a nonprimary ideal lead to false positives: sets of $\Sigma$ that satisfy the polynomial constraints but do not belong to the algebraic model. We may further classify ideals by the types of spurious primary components to which they give rise:
\begin{compactitem}
\item If an ideal $I_{\text{comp}}$ in the primary decomposition
  contains (a power of) a principal minor of $\Sigma$, then
  $V(I_{\text{comp}})$ only consists of $\Sigma$ for which this
  principal minor evaluates to 0. Such $\Sigma$ are on the boundary of
  the positive definite cone, so $V(I_{\text{comp}}) \cap \PD_n =
  \myempty$ and this spurious component does not impact the
  constraints' ability to distinguish which covariance matrices are in
  the algebraic model. If all spurious components of an ideal have
  this property, we call the ideal \emph{PD-primary}.
\item Suppose some $f \in I_{\text{comp}}$ contains a monomial
  $\prod_v \sigma_{vv}^{k_v}$ for some sequence $(k_v)_v$; i.e., a monomial consisting entirely of elements from the
  diagonal of $\Sigma$. If $f$ is $V$-homogeneous, there can be at
  most one monomial of this form. Then the identity matrix $I$, or in fact any diagonal $\Sigma \in \PD_n$,
  is not in $V(I_{\text{comp}})$, because $f(\Sigma) = \prod_v
  \sigma_{vv}^{k_v} > 0$. If all spurious components of an ideal
  contain such a monomial, we call the ideal \emph{$I$-primary}.

  All algebraic models we consider in this
  paper \emph{do} contain all such diagonal $\Sigma$ --- even the
  smallest model, described by the empty mixed graph, contains all
  these. This makes it clear that this component is spurious. So in
  this case, it may be that $V(I_{\text{comp}}) \cap \PD_n \neq
  \myempty$, but we do have that the polynomial constraints contain
  all the information needed to uniquely indicate an algebraic
  model.
\item In more general cases, the spurious component may be equal to
  an algebraic model other than the one we mean to describe. We may
  be able to distinguish which component is correct and which are
  spurious by referencing the denominators in the rational constraints
  \eqref{rational}. But the polynomial constraints by themselves may not
  contain enough information to indicate a single algebraic model.
\end{compactitem}
Note that primary ideals are PD-primary, and PD-primary ideals are $I$-primary.

As mentioned in Section~\ref{sec:uai_thm}, Theorem~1 of \cite{VanOmmenMooij2017_AlgebraicEquivalence} leaves open the possibility of $\Sigma$ that satisfy the polynomial constraints but are not in $\cM(G)$, when either $\Lambda_\cY$ is undefined at $\Sigma$ or $\Lambda_\cY(\Sigma) \notin \R^D_{\text{reg}}$. This final case can only occur if $G$ contains a directed cycle, and we will not address it further in this paper. The case of  $\Lambda_\cY$ being undefined arises when a matrix $\bA^{(v)}$ in the HTC identification procedure is singular (see the end of Section~\ref{sec:htc_construction}). We can use this to show that certain ideals are well-behaved:

\begin{proposition}
  If $G$ is ancestral, the ideal generated by the graphical constraints of Algorithm~\ref{alg:htc_construction} with the choice $Y_v = \pa(v)$ for each $v$ is PD-primary.
\end{proposition}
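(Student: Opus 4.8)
The plan is to show that every irreducible component of $V(I)$ other than $\overline{\cM(G)}$ is contained in the vanishing locus of some principal minor of $\Sigma$, where $I$ denotes the ideal generated by the graphical constraints of Algorithm~\ref{alg:htc_construction} with $Y_v=\pa(v)$ and $I_G$ the prime ideal of $\overline{\cM(G)}$. This is exactly PD-primality at the level of minimal primes (embedded primary components lying over a spurious minimal prime inherit the property, and ones over $I_G$ do not affect $V(I)$). Three ingredients are needed: (a) ancestrality forces $\bA^{(v)}=\Sigma_{\pa(v),\pa(v)}$, so the incidental factors produced by the construction are principal minors; (b) hence on $\PD_n$ the graphical constraints are equivalent to the rational constraints \eqref{rational}, with $\Lambda_\cY$ defined and regular; (c) statement~3 of Theorem~1 of \cite{VanOmmenMooij2017_AlgebraicEquivalence} then pins the offending components down.

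\textbf{Step 1: ancestrality.}
Since $G$ is ancestral it is acyclic and bow-free, so $\R^D_{\text{reg}}=\R^D$ (hence $\Lambda_\cY(\Sigma)\in\R^D_{\text{reg}}$ whenever $\Lambda_\cY$ is defined) and $Y_v=\pa(v)$ is a valid family of HTC-identifying sets. I would first prove that $\pa(v)\cap\htr(v)=\myempty$ for every $v\in V$: a common element $p$ gives a half-trek from $v$ to $p$ which, together with the edge $p\to v$, yields either a directed cycle (if the half-trek is a directed path) or --- if it begins with a bidirected edge $v\leftrightarrow m$ --- a witness that $m\in\mathrm{an}(v)$ with $v\leftrightarrow m\in B$, i.e.\ an almost-directed cycle; an ancestral graph has neither. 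Hence in the matrix $\bA^{(v)}$ from the proof of Theorem~\ref{thm:htc_construction} there are no rows of the special ``$y\in\htr(v)$'' type, so $\bA^{(v)}=\Sigma_{\pa(v),\pa(v)}$ and $\lvert\bA^{(v)}\rvert=\det\Sigma_{\pa(v),\pa(v)}$ is a principal minor of $\Sigma$.

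\textbf{Step 2: shape of the constraints.}
By Theorem~\ref{thm:htc_construction} and the remarks after it, for each valid pair $\set{v,w}$ the graphical-constraint polynomial factors as $g_{v,w}=q_{v,w}\,n_{v,w}$, where $n_{v,w}/d_{v,w}$ is the rational constraint \eqref{rational} and $q_{v,w}$ is the product of the factors $\lvert\bA^{(v')}\rvert$ arising from the node expansions divided by $d_{v,w}$; moreover $d_{v,w}$ divides that product. By Step~1 each such factor is a principal minor, so both $q_{v,w}$ and $d_{v,w}$ divide a product of principal minors of $\Sigma$, and in particular neither vanishes anywhere on the positive definite cone $\PD_n$.

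\textbf{Step 3: locating the spurious components.}
Let $W$ be an irreducible component of $V(I)$ with $W\ne\overline{\cM(G)}$, and assume for contradiction that $W$ is not contained in the zero set of any principal minor. Then each $\det\Sigma_{\pa(v),\pa(v)}$ (hence each $d_{v,w}$ and each $q_{v,w}$) is generically nonzero on $W$, so $\Lambda_\cY$ is defined and in $\R^D_{\text{reg}}$ on a dense open subset of $W$; and $g_{v,w}\equiv 0$ on $W$ together with $q_{v,w}\not\equiv 0$ on the irreducible $W$ forces $n_{v,w}\equiv 0$ on $W$, so the rational constraints \eqref{rational} hold on a dense open subset of $W$. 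By statement~3 of Theorem~1 of \cite{VanOmmenMooij2017_AlgebraicEquivalence}, a dense subset of that locus lies in $\cM(G)\subseteq\overline{\cM(G)}$; since $W$ is irreducible and $\overline{\cM(G)}$ closed, $W\subseteq\overline{\cM(G)}$, and since both are components of $V(I)$ (the latter by Theorem~1 of \cite{VanOmmenMooij2017_AlgebraicEquivalence}) we get $W=\overline{\cM(G)}$, a contradiction. Hence every component of $V(I)$ other than $\overline{\cM(G)}$ lies on a principal-minor locus, so every minimal prime of $I$ other than $I_G$ contains a principal minor, and $I$ is PD-primary.

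\textbf{Main obstacle.}
I expect Step~1 to be the heart of the matter: establishing $\pa(v)\cap\htr(v)=\myempty$ and reading off $\bA^{(v)}=\Sigma_{\pa(v),\pa(v)}$ is precisely where ancestrality --- not merely acyclicity, which already gives bow-freeness but not the half-trek property --- is used. The second delicate point is invoking statement~3 of \cite{VanOmmenMooij2017_AlgebraicEquivalence} at a generic point of an arbitrary component $W$: one needs that ``generically $\Sigma\in\cM(G)$'' there really yields a Zariski-dense subset of the locus where \eqref{rational} holds with $\Lambda_\cY(\Sigma)\in\R^D_{\text{reg}}$, sitting inside $\overline{\cM(G)}$ --- equivalently, that the only obstruction to membership in $\overline{\cM(G)}$ is a vanishing $\bA^{(v)}$, which is exactly what the principal-minor bookkeeping of Steps~1--2 accounts for. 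The minimal-versus-embedded-primary distinction is then routine.
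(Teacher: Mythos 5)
Your proof is correct and follows essentially the same route as the paper's: ancestrality gives $\pa(v)\cap\htr(v)=\myempty$, so \texttt{expand} never recurses, $\bA^{(v)}=\Sigma_{\pa(v),\pa(v)}$ is a principal submatrix, and the spurious factors/components are therefore confined to principal-minor loci. The paper's proof is just these observations stated in three sentences; your Steps 2--3 merely spell out the bookkeeping (via statement~3 of Theorem~1 of \cite{VanOmmenMooij2017_AlgebraicEquivalence}) that the paper leaves implicit in its surrounding discussion of Section~\ref{sec:ideals}.
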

\begin{proof}
  In an ancestral graph, $\pa(v) \cap \htr(v) = \myempty$, so the \texttt{expand} subroutine will not call itself recursively. Then the graphical constraint representing $\lvert \bA^{(v)} \rvert$ (see the end of Section~\ref{sec:htc_construction}) has two nodes, both with label $\pa(v)$. This describes a principal minor, implying that the ideal in question is PD-primary.
\end{proof}
\begin{proposition}\label{prop:bap}
  If $G$ is bow-free and acyclic, the ideal generated by the graphical constraints of Algorithm~\ref{alg:htc_construction} with the choice $Y_v = \pa(v)$ for each $v$ is $I$-primary.
\end{proposition}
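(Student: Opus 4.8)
The plan is to mimic the proof of the previous proposition (the ancestral case) but to settle for the weaker notion of $I$-primality, because the polynomial that was literally a principal minor there is now in general a bigger determinant. For each node $v$ let $B_v$ be the polynomial represented by the graphical constraint obtained, as at the end of Section~\ref{sec:htc_construction}, by taking a single node labelled $\set{v}$, calling \texttt{expand} on it, and then deleting $v$ from that node's label; thus $B_v$ has $\lvert\bA^{(v)}\rvert$ as a factor, and equals it in the non-recursive case. In an ancestral graph \texttt{expand} did not recurse and $B_v=\det\Sigma_{\pa(v),\pa(v)}$ was a principal minor; here $\pa(v)\cap\htr(v)$ may be nonempty, \texttt{expand} recurses, and $B_v=\det M_v$ for the larger tree-matrix $M_v$ of \eqref{matrix}. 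I will establish two claims: \emph{(i)} every spurious primary component of the ideal $I$ generated by the constraints cuts out a subvariety of $V(B_v)$ for some $v$; and \emph{(ii)} each $B_v$ is $V$-homogeneous and has a diagonal monomial with nonzero coefficient. Granting these, the radical of such a spurious component contains $B_v$, so the component contains a power $B_v^k$, which is again $V$-homogeneous and whose (unique) diagonal monomial still has nonzero coefficient; this is exactly what $I$-primality demands.

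For \emph{(i)}: since $G$ is acyclic, $I-\Lambda$ is invertible for every $\Lambda$ with the zero pattern prescribed by $D$, so $\Lambda_\cY(\Sigma)\in\R^D_{\text{reg}}$ whenever it is defined, and by statement~2 of Section~\ref{sec:uai_thm} together with Theorem~\ref{thm:htc_construction} all of $\overline{\cM(G)}$ lies in $V(I)$. Take $\Sigma\in V(I)$ with $B_v(\Sigma)\neq 0$ for every $v$. Processing the nodes in the identification order, one checks inductively that each $\bA^{(u)}$ is nonsingular at $\Sigma$, so $\Lambda_\cY(\Sigma)$ is defined; and, as noted after Theorem~\ref{thm:htc_construction}, the rational constraints~\eqref{rational} are then defined there as well, their denominators dividing a product of the $\lvert\bA^{(u)}\rvert$ and hence nonzero at $\Sigma$. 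Because the numerators of~\eqref{rational} divide the graphical constraint polynomials, which vanish at $\Sigma$, while the complementary cofactors are nonzero at $\Sigma$, the rational constraints themselves hold there. By statement~3 of Section~\ref{sec:uai_thm}, such $\Sigma$ lie in $\cM(G)$ away from a proper subvariety; taking Zariski closures gives $V(I)\subseteq\overline{\cM(G)}\cup\bigcup_v V(B_v)$. As $\overline{\cM(G)}$ is one of the irreducible components of $V(I)$ (cf.\ Section~\ref{sec:ideals}) and an irreducible set contained in a finite union of closed sets is contained in one of them, every other (spurious) component lies in some $V(B_v)$.

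For \emph{(ii)}: $B_v=\det M_v$ and all entries of $M_v$ lie among the $\sigma_{x,y}$, so, exactly as for graphical constraint polynomials in general, $B_v$ is $V$-homogeneous. For the diagonal monomial, evaluate $\lvert\bA^{(v)}\rvert$ at $\Sigma=I$: there every $\bb^{(u)}$ vanishes (its entries being $\sigma_{y,u}$ or half-trek-corrected versions thereof, with $u\notin Y_u=\pa(u)$), so the recursion returns $\Lambda_\cY(I)=0$, whereupon every $\bA^{(u)}$ is the $\lvert\pa(u)\rvert\times\lvert\pa(u)\rvert$ identity matrix and $\lvert\bA^{(u)}\rvert=1$; hence $B_v(I)\neq 0$. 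Since $B_v$ is $V$-homogeneous, the argument used for $I$-primary ideals applies to $B_v$ itself: all of its diagonal monomials coincide as a single monomial, and $B_v(I)\neq 0$ forces that monomial's coefficient to be nonzero. (Alternatively one can exhibit the perfect matching of $M_v$ using only the entries $\sigma_{x,x}$ --- send each (node, label-element) pair to the pair carrying the same element at the neighbouring node towards which that element points in the tree --- and check it is the unique such matching, so the diagonal monomial occurs with coefficient $\pm 1$.) With \emph{(i)}, this proves the proposition.

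The step I expect to require the most care is \emph{(i)}: turning the word ``generically'' in statement~3 of Section~\ref{sec:uai_thm} into the honest inclusion $V(I)\subseteq\overline{\cM(G)}\cup\bigcup_v V(B_v)$, including the inductive check that $\Lambda_\cY$ is defined off $\bigcup_v V(B_v)$, and then arguing component by component. Part \emph{(ii)} is essentially the bookkeeping of the ancestral case, with the single observation that being a principal minor was more than we need.
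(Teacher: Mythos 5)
Your proof is correct and takes essentially the same two-step route as the paper's (much terser) argument: spurious components are trapped inside the loci where some $\lvert\bA^{(v)}\rvert$ vanishes, and the graphical-constraint polynomial representing $\lvert\bA^{(v)}\rvert$ contains a unique diagonal monomial with nonzero coefficient, which is what $I$-primality requires. The additional detail you supply for both claims --- the inductive nonsingularity/Zariski-closure argument in \emph{(i)} and the evaluation at $\Sigma=I$ in \emph{(ii)} --- merely fills in steps the paper leaves implicit.
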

\begin{proof}
  Since $G$ is acyclic, any spurious components must be contained in regions where the rational constraints are not defined, meaning they must each lie within one of the varieties defined by $\lvert \bA^{(v)} \rvert = 0$ for some $v$. For the choice $Y_v = \pa(v)$, we see from the graphical constraint representing $\lvert \bA^{(v)} \rvert$ that the expansion of $\lvert \bA^{(v)} \rvert$ contains a unique monomial $\prod_v \sigma_{vv}^{k_v}$. This implies that the ideal in question is $I$-primary.
\end{proof}
We will see in Section~\ref{sec:empirical} that these sufficient conditions are far from necessary.

\subsection{Transformation of Graphical Constraints}\label{sec:transformation}

Performing elementary row and column operations on the matrix \eqref{matrix} does not change its determinant except by a constant factor. In particular, the operation of adding or subtracting one row or column from another will yield a matrix that can again be represented by a graphical constraint, if the two involved rows/columns were labelled with the same LSEM variable and the entries of the resulting matrix are still all either $0$ or $\sigma_{vw}$. Then these graphical constraints are equivalent, even though they are not isomorphic as graphs.

An application of these matrix transformations is to remove spurious factors. Theorem~\ref{thm:htc_construction} shows that the polynomial represented by a graphical constraint can be equal to the numerator of the corresponding rational constraint, or it can contain additional factors. Getting rid of such factors can sometimes remove problematic primary components from the ideal. If a matrix can be transformed into a $2 \times 2$ block diagonal matrix, this shows that its determinant factorizes into the determinants of the two diagonal blocks. Below we illustrate a class of such steps that can be recognized graphically.

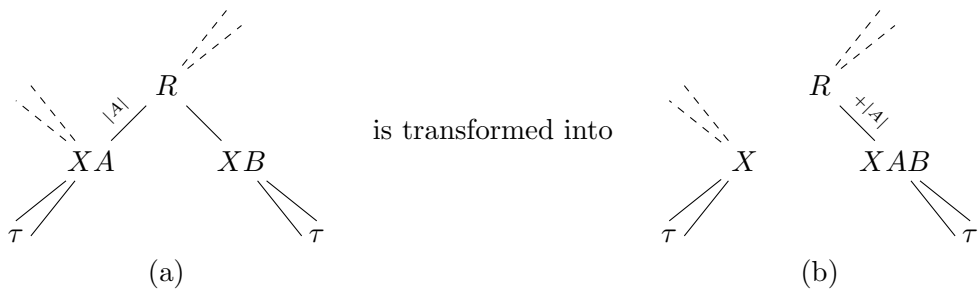
\begin{figure}[t]
  \centering
  \stackunder{\begin{tikzpicture}%
    \node (vr) at (1,1) {$R$};
    \node (va) at (0,0) {$X A$};
    \node (vb) at (2,0) {$X B$};
    \node (vta) at (-1,-1) {$\tau$};
    \node (vtb) at (3,-1) {$\tau$};
    \draw (vr) to  node [midway,sloped,above] {\tiny $\lvert A \rvert$} (va);
    \draw (vr) to (vb);
    \draw (va) to (-0.8,-1.0);
    \draw (va) to (-1.0,-0.8);
    \draw (vb) to (2.8,-1.0);
    \draw (vb) to (3.0,-0.8);
    \draw[dashed] (va) to (-0.8,1.0);
    \draw[dashed] (va) to (-1.0,0.8);
    \draw[dashed] (vr) to (1.8,2.0);
    \draw[dashed] (vr) to (2.0,1.8);
  \end{tikzpicture}}{(a)}
  \quad \raisebox{1.5cm}{is transformed into} \quad
  \stackunder{\begin{tikzpicture}%
    \node (vr) at (1,1) {$R$};
    \node (va) at (0,0) {$X$};
    \node (vb) at (2,0) {$X A B$};
    \node (vta) at (-1,-1) {$\tau$};
    \node (vtb) at (3,-1) {$\tau$};
    \draw (vr) to  node [midway,sloped,above] {\tiny $+\lvert A \rvert$} (vb);
    \draw (va) to (-0.8,-1.0);
    \draw (va) to (-1.0,-0.8);
    \draw (vb) to (2.8,-1.0);
    \draw (vb) to (3.0,-0.8);
    \draw[dashed] (va) to (-0.8,1.0);
    \draw[dashed] (va) to (-1.0,0.8);
    \draw[dashed] (vr) to (1.8,2.0);
    \draw[dashed] (vr) to (2.0,1.8);
  \end{tikzpicture}}{(b)}
  \caption{(a)~Transformation preconditions: all trees $\tau$ at the right node have copies at the left node and the weight of the left edge equals $\lvert A \rvert$;
    (b)~result of the transformation.}\label{fig:transformation}
\end{figure}
This transformation applies to tree-shaped constraints and revolves around three nodes (called left, right, and central here). Let $X$ be the intersection of left's and right's labels, and $A$ and $B$ the labels in left but not in right and vice versa. The transformation's preconditions are illustrated in Figure~\ref{fig:transformation}~(a): For each subtree at the right node (not containing the central node), there is an identical subtree at the left node; and the weight of the edge between the central and left nodes equals $\lvert A \rvert$. Here the \emph{weight} of an edge counts the number of occurrences in a term of the determinant of the elements from the submatrix of $M$ with rows and columns corresponding to the endpoints of this edge; for a tree-shaped graphical constraint, this number is the same for all terms in the determinant.

The result of the transformation is displayed in Figure~\ref{fig:transformation}~(b): the edge between the central and left nodes is deleted, and the nonshared labels of the left node are removed there and added to the right node.

\begin{proposition}
  The transformation described above preserves the represented polynomial up to sign.
\end{proposition}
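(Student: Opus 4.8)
The plan is to prove the equality by exhibiting a sequence of elementary row and column operations — each of the kind discussed in Section~\ref{sec:transformation}, i.e.\ adding a column to (or subtracting it from) another column that carries the same LSEM variable, and likewise for rows — transforming the matrix $M$ of the constraint in Figure~\ref{fig:transformation}(a) into a matrix whose determinant coincides, up to sign, with the determinant of the matrix $M'$ of the constraint in Figure~\ref{fig:transformation}(b). Each such operation leaves the determinant unchanged, and a final reordering of the rows and columns changes it at most by sign; the intermediate matrices need not themselves be graphical constraints. Transposing $M$ if necessary, I assume the central node $R$ is on the row side, so that the left node $L$ and the right node $r$ are on the column side.

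\textbf{The target.}
First I would describe $M'$. Deleting the central–left edge, as the transformation prescribes, disconnects $L$ — together with the copies of the subtrees $\tau$ hanging from it and any further (``dashed'') subtrees at $L$ — from the rest of the tree; call this component $\mathcal C$ and its complement $\mathcal C^{c}$, so that $R$, $r$, and all the $\tau$'s proper lie in $\mathcal C^{c}$. Hence $M'$ is, after reordering, block diagonal, $M' = \operatorname{diag}(M'_{\mathcal C}, M'_{\mathcal C^{c}})$, and the hypothesis that the weight of the central–left edge equals $\lvert A\rvert$ is exactly what makes both blocks square: it forces the number of (node, label)-pairs of $\mathcal C$ on the row side to equal the number on the column side once the $\lvert A\rvert$ labels of $A$ have been moved from $L$ to $r$. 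Therefore $\det M' = \pm\,\det M'_{\mathcal C}\cdot\det M'_{\mathcal C^{c}}$, and it suffices to obtain $\det M = \pm\,\det M'_{\mathcal C}\cdot\det M'_{\mathcal C^{c}}$.

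\textbf{The cascade.}
The operations on $M$ descend the copied subtrees. At level $0$, for each $x\in X$ subtract column $(r,x)$ from column $(L,x)$; since both carry the variable $x$ and agree (both equal $\sigma_{x,\cdot}$) in the rows of $R$, this cancels the entries of column $(L,x)$ in those rows, but creates new entries of that column in the rows of the children $\rho'_i$ of $r$ (the roots of the $\tau$'s at $r$). At level $1$ these are removed: for each $\rho'_i$, add the row of its twin $\rho_i$ (a child of $L$) to the row of $\rho'_i$ — legal because, by the hypothesis that every subtree at $r$ occurs identically at $L$, the twins $\rho_i$ and $\rho'_i$ carry the same label. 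This in turn creates entries of $\rho'_i$'s row in the columns of $\rho_i$'s children, removed at level $2$ by column operations (left copy's column minus right twin's column), whose new entries are removed at level $3$ by row operations (right twin's row plus left copy's row), and so on: at every level one operates between a node of a copied $\tau$-at-the-left and its twin in $\tau$-at-the-right, so all operations are legal, and the ``defect'' is pushed strictly deeper into the finite subtrees $\tau$, so the cascade terminates. The extra subtrees at $L$ (inside $\mathcal C$) and the subtrees at $R$ (inside $\mathcal C^{c}$) are never touched.

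\textbf{Bookkeeping and the obstacle.}
The conclusion would then follow by reordering the rows and columns so that those of $\mathcal C$ come first, counting the $\lvert A\rvert$ columns formerly called $(L,a)$, $a\in A$, among the columns of $\mathcal C^{c}$ (these become the columns $(r,a)$ of $M'$), and checking: (i) the block (rows of $\mathcal C^{c}$, columns of $\mathcal C$) is now zero — exactly what the cascade was built to achieve; (ii) the block (rows of $\mathcal C$, columns of $\mathcal C$) is unchanged and equals $M'_{\mathcal C}$, because every column operation modifies a column of $\mathcal C$ by subtracting one with no entries in the rows of $\mathcal C$, and every row operation modifies a row of $\mathcal C^{c}$; and (iii) the block (rows of $\mathcal C^{c}$, columns of $\mathcal C^{c}$) has become $M'_{\mathcal C^{c}}$ — in particular the level-$1$ row operations supply precisely the entries $\sigma_{\cdot,a}$ in the rows $\rho'_i$ that the relabelled columns $(r,a)$ require, while no later operation alters this block. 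The matrix is then block upper-triangular with diagonal blocks $M'_{\mathcal C}$ and $M'_{\mathcal C^{c}}$, so $\det M = \pm\,\det M'_{\mathcal C}\cdot\det M'_{\mathcal C^{c}} = \pm\,\det M'$. The main obstacle is items (ii)–(iii) together with the no-residual-coupling claim: one must verify carefully, by induction on the depth of the subtrees $\tau$, that the cascade creates entries only in the (irrelevant) block (rows of $\mathcal C$, columns of $\mathcal C^{c}$) and in the intended places, so that the two diagonal blocks end up exactly as dictated by constraint~(b).
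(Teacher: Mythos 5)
Your proposal is correct and follows essentially the same route as the paper's proof: elementary row/column operations pairing each row (resp.\ column) of the right $\tau$-copies with its twin in the left $\tau$-copies, together with subtracting the right node's $X$-columns from the left node's, followed by a block-triangularity argument with the $A$-columns reassigned to the right block and the weight condition guaranteeing square diagonal blocks. The only difference is presentational — the paper applies all twin operations simultaneously (they commute, since modified rows/columns are never themselves used as operands) rather than as your level-by-level cascade, and it is in fact terser than you are about the final bookkeeping you flag as the remaining obstacle.
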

\begin{proof}
Assume w.l.o.g.~that the left and right nodes are columns. Add `cross-edges' between the left node and the roots of the right $\tau$-subforest by adding to each row in the right $\tau$-subforest the corresponding row from the left $\tau$-subforest, and similarly reducing columns in the left $\tau$-subforest by their right counterparts. Then reduce the columns of the left node's shared labels $X$ by their right counterpart. Putting the left node with all attached parts first in the matrix and the central and right nodes with attached parts second, we find that the diagonal blocks are square and that all entries in the bottom left block are 0, so making the top right block 0 as well does not change the determinant.
\end{proof}

\section{Computational Study}\label{sec:empirical}

Proposition~\ref{prop:bap} shows that for bow-free acyclic graphs, with the choice $Y_v = \pa(v)$, Algorithm~\ref{alg:htc_construction} is guaranteed to produce an $I$-primary ideal. Since PD-primary ideals are preferable, we ran a computational study over all bow-free acyclic graphs on five variables with nine edges. (For these graphs, the model is described by exactly one constraint, and the type of ideal generated by this constraint can be determined by its factorization. Graphs with fewer edges impose more constraints, the analysis of which was not computationally feasible.)

The results were as follows: Without transformation, among the 86 algebraic equivalence classes (counting isomorphic classes only once, so e.g.~the class imposing graphical constraint $a$---$b$ and the one imposing $d$---$e$ are together counted once), there were 5 classes containing mixed graphs for which the algorithm produced a non-PD-primary ideal (each of these classes also contained graphs producing PD-primary ideals).
Using the transformation displayed in Figure~\ref{fig:transformation}, all non-PD-primary ideals were converted to PD-primary ones.
So in practice, the ideals are often of a higher quality than could be guaranteed by Proposition~\ref{prop:bap}.

The above is a measure of the utility of Algorithm~\ref{alg:htc_construction} and of the transformation from Figure~\ref{fig:transformation}. We also want to know whether the class of graphical constraints has enough expressive power to represent arbitrary polynomial constraints, preferably without spurious factors. To study this, for each algebraic equivalence class on five nodes with at least nine edges per mixed graph, allowing bows but not directed cycles, we tried the algorithm on all mixed graphs in the class and all HTC-identifying choices of $\cY$. If we did not find a graphical constraint without spurious factors this way, we also tried various transformations or even brute-force search. We did the same for four-node graphs with at least five edges allowing bows and directed cycles.

\begin{table}
\centering
\begin{tabular}{llrr}
\toprule
primary & PD-primary & 5-node acyclic & 4-node general \\
\midrule
tree & tree & 374 & 16\\
nontree(?) & tree & 2 & 0\\
none & tree & 2 & 0\\
? & tree & 3 & 0\\
? & ? & 17 & 3\\
\midrule
\multicolumn{2}{l}{total} & 398 & 19\\
\bottomrule
\end{tabular}
\caption{Number (up to isomorphism) of algebraic equivalence classes imposing one constraint for which certain types of graphical constraints are known to exist.}\label{tab:existence}
\end{table}

The results are summarized in Table~\ref{tab:existence}.
Among the five-node graphs, there were 18 equivalence classes without any HTC-identifiable members, so Algorithm~\ref{alg:htc_construction} could not be used. One of these classes imposed a polynomial constraint of degree 7. For this constraint we found a graphical form via brute-force search.
The other 17 equivalence classes impose constraints of degrees 9 through 14, for which such a brute-force search was infeasible.

It is noteworthy that in all classes where we know at least one graphical constraint, we know a (tree-shaped) PD-primary constraint. In the majority of classes, this constraint is in fact primary. In two cases, all primary graphical constraints are nontrees (confirmed by brute-force search in one of these two cases), and in two other cases, a brute-force search found no primary graphical constraints of any kind. In three further cases, primary graphical constraints have not been found but might exist.

\section{Conclusion}\label{sec:conclusion}

We have presented a graphical formalism to give succinct representations of polynomials that appear as constraints on the algebraic model of an LSEM, and illustrated the expressive power of these graphical constraints for purposes of describing algebraic models.

We also presented an algorithm that derives a list of graphical constraints for any HTC-identifiable graph, and a transformation that can be used to simplify some graphical constraints. While our empirical results show they are quite powerful at their tasks, there is room for improvement here. A number of generalizations of HTC have been proposed, such as EID and TSID \citep{WeihsRDKKMNRobevaDrton2018_DetGeneralizationsOfIVs}, and generalizing our algorithm to work with these would enable the derivation of graphical constraints for a larger class of mixed graphs. Improved algorithms might also avoid some of the spurious factors which are sometimes produced by our current algorithm.

Another important direction for future work is to define a graphical separation criterion along the lines of d-separation and t-separation \citep{SullivantTalaskaDraisma2010}, which, given a mixed graph and a graphical constraint, allows us to decide whether that constraint holds in the graph's model. Such a criterion may build on the restricted trek separation criterion of \citet{DrtonRobevaWeihs2020_RestrictedTrekSeparation}: our graphical constraints can be seen as recursively nested determinants that obey additional structural restrictions.

\appendix
\acks{This project has received funding from the European Research Council (ERC) under the European Union's Horizon 2020 research and innovation programme (grant agreement No 883818).}
\vskip 0.2in
\bibliography{../caus}

\end{document}